\newcommand\be{\begin{equation}}
\newcommand\ee{\end{equation}}
\newcommand\bea{\begin{eqnarray}}
\newcommand\eea{\end{eqnarray}}
\newcommand\bi{\begin{itemize}}
\newcommand\ei{\end{itemize}}
\newcommand\ben{\begin{enumerate}}
\newcommand\een{\end{enumerate}}
\newcommand\bc{\begin{center}}
\newcommand\ec{\end{center}}
\newcommand\ba{\begin{array}}
\newcommand\ea{\end{array}}
\newtheorem{thm}{Theorem}[section]
\newtheorem{lem}[thm]{Lemma}
\newtheorem{defi}[thm]{Definition}
\theoremstyle{definition}
\newtheorem{rek}[thm]{Remark}
\begin{document}

\title{The inverse problem for representation functions for general linear 
forms}

\author{Peter Hegarty}
\email{hegarty@math.chalmers.se} \address{Mathematical Sciences,
Chalmers University Of Technology and G\"oteborg University,
G\"oteborg, Sweden}

\subjclass[2000]{11B13, 11B34, 11B05} \keywords{Integer basis, representation
function, linear form}

\date{\today}

\begin{abstract} The inverse problem for representation functions takes as 
input a triple $(\mathbb{X},f,\mathscr{L})$, where $\mathbb{X}$ is 
a countable 
semigroup, $f : \mathbb{X} \rightarrow \mathbb{N}_{0} \cup \{\infty\}$
a function, $\mathscr{L} : a_{1}x_{1} + \cdots + a_{h} x_{h}$ an 
$\mathbb{X}$-linear form and asks for a subset $A \subseteq \mathbb{X}$ 
such that there are $f(x)$ solutions (counted appropriately) 
to $\mathscr{L}(x_1,...,x_h) = x$ for
every $x \in \mathbb{X}$, or a proof that no such subset exists. 
\par This paper
represents the first systematic study of this problem for arbitrary linear
forms when $\mathbb{X} = \mathbb{Z}$, the setting which in many respects is the
most natural one. Having first settled on the $\lq$right' way to count 
representations, we prove that every primitive form has a unique representation
basis, i.e.: a set $A$ which represents the function $f \equiv 1$. We
also prove that a partition regular form (i.e.: one for which no 
non-empty subset of the coefficients sums to zero) represents any function $f$
for which $\{f^{-1}(0)\}$ has zero asymptotic density. These
two results answer questions recently posed by Nathanson. 
\par The inverse
problem for partition irregular forms seems to be more 
complicated. The simplest example of such a form is $x_1 - x_2$, and for this
form we provide some partial results. Several remaining open problems are
discussed.  
  
\end{abstract}


\maketitle

\setcounter{equation}{0}

\setcounter{equation}{0}

\section{Introduction and Definitions}

A fundamental notion in additive number theory is that of {\em basis}. 
Given a positive integer $h$, a subset $A \subseteq \mathbb{N}_0$ for which
$0 \in A$ is said to be a
{\em basis} for $\mathbb{N}_0$ of 
{\em order} $h$ if, for every $n \in \mathbb{N}_0$ the equation
\be
x_1 + x_2 + \cdots + x_h = n
\ee
has at least one solution in $A$. The requirement that $0 \in A$ means that, in
words, $A$ is a basis of order $h$ if every positive integer can be written
as the sum of at most $h$ positive integers from $A$. 
\par In classical number theory, we encounter
questions of the type : is the following set $A$ 
a basis for $\mathbb{N}_0$ and, if 
so, of what order ? Famous examples include the cases when $A$ is the 
set $\mathbb{N}_{0}^{k}$ of perfect $k$:th powers, for some fixed $k$ 
(Waring's Problem), or the set $\mathbb{P}_{0,1}$ of
primes together with 0 and 1 (Goldbach's Problem). 
In both these cases, it is in fact more natural
to consider a slightly weaker notion, namely that of {\em asymptotic basis}.
A subset $A \subseteq \mathbb{N}_0$ is said to be an asymptotic basis of order
$h$ if (1.1) has a solution for every $n \gg 0$. For example, in Waring's
Problem, if $g(k)$ and $G(k)$ denote the order, resp. asymptotic order, of
the set $\mathbb{N}_{0}^{k}$, then it is known that $G(k)$ is considerably less
than $g(k)$ for large $k$. Regarding the primes, Vinogradov's Theorem says that
$\mathbb{P}_{0,1}$ 
is an asymptotic basis of order 4, while it remains open as to 
whether it is actually a basis of even that order. Goldbach's conjecture
would imply the much stronger result that $\mathbb{P}_{0,1}$ is a basis of 
order 3. In this regard, it is well-known that the subset of the positive
even integers representable as the sum of two primes has 
asymptotic density one (see, for example, \cite{Va} Theorem 3.7). This 
motivates a further fairly natural weakening of the notion of basis. In
the terminology of \cite{Na4}, we say that $A \subseteq \mathbb{N}_0$ is a 
{\em basis of order $h$ for almost all $\mathbb{N}_0$} if the set of
$n \in \mathbb{N}_0$ for which (1.1) has a solution in $A$ has
asymptotic density one. 
\par In the terminology commonly used by practitioners of the subject, the 
above classical problems are illustrations of a {\em direct problem}, where 
we are in essence 
seeking a description of the $h$-fold sumset of a specified set $A$. 
The corresponding {\em inverse problem} is to construct a set
$A$ with a specified so-called {\em (unordered) 
representation function} of a certain 
order. Let $f : \mathbb{N}_0 \rightarrow \mathbb{N}_0$ be any function, 
$h \in \mathbb{N}$ and $A \subseteq \mathbb{N}_0$. We say that $f$ is the
(unordered) representation function of $A$ of order $h$ if, for every 
$n \in \mathbb{N}_0$, 
\be
f(n) = \# \{(x_1,...,x_h) \in A^h : x_1 \leq x_2 \leq \cdots \leq x_h \; 
{\hbox{and (1.1) holds}} \}.
\ee  
If (1.2) holds then we write $f = f_{A,h}$. The relationship between bases
and representations functions is thus that 
\\
\par - $A$ is a basis of order $h$ if and only if $f_{A,h}^{-1} (0)$ is empty,
\par - $A$ is an asymptotic basis of order $h$ if and only if 
$f_{A,h}^{-1} (0)$ is finite, 
\par - $A$ is a basis of order $h$ for almost all $\mathbb{N}_0$ if and
only if $d[f_{A,h}^{-1}(0)] = 0$. 
\\
\\
The inverse problem for bases/representation functions 
in $\mathbb{N}_0$ is, in general, very
hard. Probably the single most famous illustration of this is the long-standing
question of Erd\H{o}s and Tur\'{a}n \cite{ETu} as to whether there exists an
asymptotic basis of any order $h$ whose representation function is 
bounded. Not much is known beyond the facts that, on the one hand,
$f_{A,2}$ cannot be ultimately constant \cite{D} while, on the other, there
exist for every $h$ so-called {\em thin} bases $A_h$ satisfying 
$f_{A_h,h} (n) = \Theta (\log n)$ \cite{ETe}.
\par In seeking a more tractable inverse problem, a natural starting point are
the following two observations :
\\
\\
{\bf I}. 
The various notions of basis make sense in any additive semigroup, not just
$\mathbb{N}_0$.
\\
{\bf II}. Intuitively, it is easy to see why the inverse problem is hard in
$\mathbb{N}_0$. Namely, when trying to construct a set $A$ with a given
representation function $f$ of a given order $h$, 
we cannot use negative numbers to help $\lq$fill 
in gaps'. More precisely, suppose we try to construct our set $A$ one element 
at a time and at some point have constructed a finite set $A^{\prime}$ such 
that 
\be
f_{A^{\prime},h}(n) \leq f(n) \; {\hbox{for every $n \in \mathbb{N}_0$}}. 
\ee
Assuming
$f^{-1}(0)$ is finite, say, there will
be a smallest $n = n_1$ for which we have strict inequality in (1.3). 
We would now like
to add some more elements to $A^{\prime}$ which create a new solution to (1.1) 
for $n=n_1$ while not violating (1.3). If we could use negative numbers then,
as long as $f^{-1}(0)$ is finite,
a natural way to do this would be to add to $A^{\prime}$ exactly $h$ new elements
which (a) don't all have the same sign (b) are all much larger in absolute
value than anything currently in $A^{\prime}$ (c) almost cancel each other
out in exactly one way, in which case they add up to $n_1$. 
\\
\\
These observations led Nathanson to consider the inverse problem for
representation functions in $\mathbb{Z}$ or, more generally, in countable 
abelian groups. The fundamental result showing that
we have a much more tractable problem in this setting is the 
following :

\begin{thm}\label{thm:zbases} \cite{Na2}
Let $f : \mathbb{Z} \rightarrow \mathbb{N}_0 \cup \{\infty\}$ be any 
function for which $f^{-1}(0)$ is a finite set. Then for every $h \in 
\mathbb{N}_{\geq 2}$ there exists a subset $A \subseteq \mathbb{Z}$ such
that $f_{A,h} = f$. 
\end{thm}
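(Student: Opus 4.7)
The plan is a greedy inductive construction. I build a nested chain $A_0 = \emptyset \subset A_1 \subset \cdots$ of finite sets with $A = \bigcup_k A_k$, maintaining both the main invariant $f_{A_k, h}(n) \leq f(n)$ for every $n \in \mathbb{Z}$ and the auxiliary Sidon-type invariant $f_{A_k, m}(n) \leq 1$ for every $1 \leq m \leq h - 1$ and every $n \in \mathbb{Z}$; the latter is what will allow me to control multiplicities when new elements are inserted. I first fix an enumeration $(t_k)_{k \geq 1}$ in which each $n$ with $0 < f(n) < \infty$ appears exactly $f(n)$ times and each $n$ with $f(n) = \infty$ appears infinitely often, arranged by diagonalization so that every such $n$ is visited in some finite initial segment. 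At stage $k$, if $f_{A_{k-1}, h}(t_k)$ is strictly less than the number of occurrences of $t_k$ among $t_1, \ldots, t_k$, I enlarge $A_{k-1}$ by $h$ new integers; otherwise I set $A_k = A_{k-1}$.

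For the enlargement I exploit the presence of negative integers in $\mathbb{Z}$: set $a_i = M^i$ for $1 \leq i \leq h-1$ and $a_h = t_k - (M + M^2 + \cdots + M^{h-1})$, where $M$ is a large positive integer to be fixed, and put $A_k = A_{k-1} \cup \{a_1, \ldots, a_h\}$. Any $m$-multiset of $A_k$ using at least one new element has sum
\[
S \;=\; c_h t_k + \sum_{i=1}^{h-1}(c_i - c_h) M^i + \sum_j o_j,
\]
where $c_i$ is the multiplicity of $a_i$, the $o_j$ are old elements of $A_{k-1}$, and $\sum_i c_i + \#\{o_j\} = m$, $\sum_i c_i \geq 1$. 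A base-$M$ argument shows that for $M$ exceeding a threshold depending on $h$, $t_k$ and $A_{k-1}$, the middle sum has absolute value at least $M/2$ unless all $c_i = c_h$; and that equal case, combined with $\sum_i c_i = h c_h \leq m \leq h$, forces either $c_h = 0$ (no new element used, excluded) or $c_h = 1$ with all $c_i = 1$ and no old elements, giving $m = h$ and $S = t_k$ -- the intended representation. In every other case $|S| \geq M/4$, placing $S$ outside $f^{-1}(0)$ (finite by hypothesis) and outside the support of $f_{A_{k-1}, m}$ (finite since $A_{k-1}$ is finite).

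The main obstacle is to verify that both invariants actually persist. Suppose two distinct new $m$-multisets have the same sum. Either (i) they share the same new-coefficient vector $(c_1, \ldots, c_h)$ but differ in their old parts, in which case these old parts are two distinct $(m - \sum_i c_i)$-multisets of $A_{k-1}$ with equal sum, contradicting the auxiliary Sidon invariant at order $m - \sum_i c_i < h$; or (ii) they have different new-coefficient vectors. In case (ii), subtracting the two sums yields a polynomial identity $\sum_{i<h} d_i M^i = E$ with $d_i, E$ bounded in terms of $h$, $t_k$ and $A_{k-1}$: if some $d_i \neq 0$, the left side grows as a positive power of $M$ and exceeds $|E|$ for $M$ large; and if all $d_i = 0$, a short computation forces $c_i^{(1)} - c_i^{(2)}$ to be independent of $i$, so the two totals $\sum_i c_i^{(1)}$ and $\sum_i c_i^{(2)}$ differ by a nonzero multiple of $h$, impossible since both lie in $[1, m] \subseteq [1, h]$. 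Hence each unintended $n$ receives at most one new representation, and since $|S| \geq M/4$ puts it in the regime $f(n) \geq 1$ and $f_{A_{k-1}, h}(n) = 0$, both invariants are preserved. In the limit, each $n \in \mathbb{Z}$ is serviced exactly $f(n)$ times, yielding $f_{A, h}(n) = f(n)$ for all $n$, as required.
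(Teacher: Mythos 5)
Your construction is correct and is essentially the method the paper itself uses (observation \textbf{II} of the introduction and the step-by-step construction of Section 2, which is the technique of \cite{Na2} from which this theorem is quoted): at each stage you adjoin a block of $h$ integers with rapidly growing ratios, one of which is chosen to nearly cancel the others so that the block sums to the current target, and you exclude unintended coincidences by a dominant-term argument in the large parameter $M$, noting that collisions land on integers of absolute value at least $M/4$, where $f$ is nonzero and the old set has no representations. The only presentational difference is that you maintain an explicit Sidon-type invariant at every order $m<h$ in order to compare the old parts of two colliding multisets, whereas the paper substitutes a common element of $B_{k-1}$ for the new elements and invokes uniqueness at the full order; both devices do the same job, and your case analysis (equal versus unequal coefficient vectors, with the all-$d_i=0$ case forcing the totals to differ by a multiple of $h$) is sound.
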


In particular, the Erd\H{o}s-Tur\'{a}n question has a positive answer in
$\mathbb{Z}$ : we can even construct a set $A$ such that $f_{A,h}(n) = 1$
for every $n$, a so-called {\em unique representation basis of order $h$}
for $\mathbb{Z}$. Nathanson's proof of Theorem \ref{thm:zbases} follows
the idea in observation {\bf II} above. 
\\
\\
Now that we have a more tractable problem, we can look to push our
investigations deeper. One line of enquiry which seems natural is to extend
the basic notion of basis further by replacing the left-hand side of (1.1) by 
an arbitrary linear form $a_1 x_1 + \cdots + a_h x_h$. If 
the $a_i$ are assumed to integers, then this idea makes
sense in any additive semigroup, otherwise one should work in a 
commutative ring. For the remainder of this paper, though, we shall 
always be working in $\mathbb{Z}$, but the interested reader is invited to
extend the discussion to a more general setting. Note that the various notions
of basis are only meaningful if the linear form is {\em primitive}, i.e.:
if the coefficients are relatively prime. This will be assumed throughout.
\par We now start with a couple of formal definitions. 

\begin{defi}\label{defi:linbasis}  
Let $a_1,...,a_h$ be relatively prime non-zero integers and let 
$\mathscr{L} = \mathscr{L}_{a_1,...,a_h}$ denote the linear form 
$a_1 x_1 + \cdots + a_h x_h$. A subset $A \subseteq \mathbb{Z}$ is said to be
an {\em $\mathscr{L}$-basis} if the equation
\be
a_1 x_1 + \cdots + a_h x_h = n
\ee
has at least one solution for every $n \in \mathbb{Z}$.
\par Similarly, we say that $A$ is an {\em asymptotic 
$\mathscr{L}$-basis} if (1.4) has a solution for all but finitely many $n$, 
and that $A$ is an {\em $\mathscr{L}$-basis for almost all $\mathbb{Z}$} if
those $n$ for which (1.4) has no solution form a set of asymptotic 
density zero.   
\end{defi}

\begin{rek}
Recall that a subset $S \subseteq \mathbb{Z}$ is said to have asymptotic 
density zero if 
\be
\lim_{n \rightarrow + \infty} {|S \cap [-n,n]| \over 2n+1} = 0.
\ee
\end{rek}
 
To generalise the notion of unordered representation function to 
arbitrary linear forms requires a bit more care. The definition we give below 
is, we think, the natural one. First we need some terminology. A solution
$(x_1,...,x_h)$ of (1.4) is said to be a {\em representation} of $n$ by the
form $\mathscr{L} = \mathscr{L}_{a_1,...,a_h}$. We say that two
representations $(x_1,...,x_h)$ and $(y_1,...,y_h)$ of the same integer 
$n$ are {\em equivalent} if, for every $\xi \in \mathbb{Z}$, 
\be
\sum_{x_i = \xi} a_i = \sum_{y_i = \xi} a_i.
\ee
We now define the {\em (unordered) $\mathscr{L}$-representation function} 
$f_{A,\mathscr{L}}$ of
a subset $A \subseteq \mathbb{Z}$ as 
\be
f_{A,\mathscr{L}}(n) = \# \{{\hbox{equivalence classes of representations of
$n$ by $\mathscr{L}$}} \}.
\ee
There are a few existing 
results on the inverse problem for bases for general linear forms. Indeed in
\cite{SS} the problem was already raised in the much more difficult setting
of $\mathbb{N}_0$, in which case one may assume that the coefficients $a_i$ 
in (1.4) are positive. No results were proven in that paper, 
and none of the specific 
problems the authors posed have, to the best of our knowledge, been 
settled since. They do make the intriguing observation, though, that
for some forms one can construct a unique representation basis for 
$\mathbb{N}_0$, for example the form $x_1 + ax_2$ for any $a > 1$. It 
would be fascinating to have a full classification of the forms for which
this is possible. The one result of note we are aware of is Vu's extension 
\cite{Vu} of the Erd\H{o}s-Tetali result on thin bases to general linear forms.
\par In the setting of $\mathbb{Z}$, one is first and foremost interested in 
generalising Theorem \ref{thm:zbases}. There are some recent results of 
Nathanson \cite{Na3} on binary forms, and in \cite{Na4} he poses some 
problems for general forms. Our results answer some of his questions and 
supersede those in \cite{Na2}. 
\par It should be noted here that in all the papers referenced above, only
the ordered representation function is considered, meaning that one 
distinguishes between equivalent representations of the same
number. For Vu's result, this distinction is not important (since his is a
$\Theta$-result), but the results we shall prove here have a much more 
elegant formulation when one works with unordered representations. 
\\
\\
We close this section by briefly summarising the results to follow. In 
Section 2 we prove that for any primitive form $\mathscr{L}$ there 
exists a unique representation basis. This generalises the main result of
\cite{Na1} and answers Problem 16 of \cite{Na4}. Our method is founded
on observation {\bf II} on page 2 and is thus basically the same as that
employed in these earlier papers. However, we believe our presentation
is much more streamlined, especially when specialised to the forms 
$x_1 + \cdots + x_h$. 
\par In Section 3 we seek a generalisation of 
Theorem \ref{thm:zbases}. We introduce the notion of an {\em automorphism}
of a linear form and show that a form has no
non-trivial automorphisms (we will say what $\lq$non-trivial' means) if and
only if it is {\em partition regular} in the sense of Rado, i.e.: no non-empty
subset of the coefficients sums to zero. Our main result in this section 
is that, 
if $\mathscr{L}$ is partition regular then,  
for any $f : \mathbb{Z} \rightarrow \mathbb{N}_0 \cup \{\infty\}$ such that
the set $f^{-1}(0)$ has density zero, there exists $A \subseteq 
\mathbb{Z}$ for which $f_{A,\mathscr{L}} = f$. Since any form all of whose
coefficients have the same sign is partition regular, this 
result generalises Theorem \ref{thm:zbases}. But it also extends that
theorem, since we 
only require $f^{-1}(0)$ to have density zero, and not necessarily be 
finite. It thus answers 
Problem 13 and partly resolves Problem 17 in \cite{Na4}. 
\par Irregular forms seem to be harder to deal
with. The simplest such form is $\mathscr{L}_{1,-1} : x_1 - x_2$. In Section 4
we study this form but our results are weaker than those in Section 3. Open
problems remain and these are discussed in Section 5.
\par Finally, note that the methods of proof in Sections 3 and 4 are
in essence no different from those in Section 2. The main point here is
in identifying the $\lq$right' theorems, but once 
this is done no really new ideas are needed to carry out the proofs.       
 
\setcounter{equation}{0}
\section{Unique representation bases}\label{sec:unique}
Before stating and proving the main result of this section, we introduce some 
more notation and terminology similar to that in (1.6)
above. Let $\mathscr{L} : a_1 x_1 + \cdots + a_h x_h$ be a linear form. 
Let $m,p$ be positive integers, $(r_1,...,r_m)$ any $m$-tuple and
$(s_1,...,s_p)$ any $p$-tuple of integers, and 
\be
\pi_{1} : \{1,...,m\} \rightarrow \{1,...,h\}, \;\;\;
\pi_{2} : \{1,...,p\} \rightarrow \{1,...,h\},
\ee
any functions. 
We say that the sums $\sum_{i=1}^{m} a_{\pi_{1}(i)} r_{i}$ and
$\sum_{i=1}^{p} a_{\pi_{2}(i)} s_{i}$ are {\em equivalent w.r.t. $\mathscr{L}$},
and write
\be
\sum_{i=1}^{m} a_{\pi_{1}(i)} r_{i} \equiv_{\mathscr{L}}
\sum_{i=1}^{p} a_{\pi_{2}(i)} s_{i}
\ee
if, for every $\xi \in \mathbb{Z}$,
\be
\sum_{r_{i} = \xi} a_{\pi_{1}(i)} = \sum_{s_{i} = \xi} a_{\pi_{2}(i)}.
\ee
Note that this generalises the notion of equivalent representations in 
Section 1 since the latter corresponds to the 
special case $m=p=h$, $\pi_1 = \pi_2 = {\hbox{id}}$.
\\
\\
The remainder of this section is devoted to the proof of the following 
result :

\begin{thm}\label{thm:unique} (i) Let $\mathscr{L}$ be a linear form. 
Then there exists a
unique representation $\mathscr{L}$-basis if and only if 
$\mathscr{L}$ is primitive.
\\
(ii) Let $\mathscr{L} = \mathscr{L}_{a_{1},...,a_{h}}$. The following are
equivalent :
\par (a) $\mathscr{L}$ is primitive and not all $a_{i}$ have the same sign,
\par (b) for every $n \in \mathbb{Z}$, there exists a unique
represenntation 
$\mathscr{L}$-basis $A(n) \subseteq [n,+ \infty)$.
\end{thm}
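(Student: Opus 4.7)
The plan is to establish both parts through a single inductive greedy construction, reserving short direct arguments for the necessity directions. For necessity: if $d := \gcd(a_1, \ldots, a_h) > 1$, then $\mathscr{L}(\mathbb{Z}^h) \subseteq d\mathbb{Z}$, so no $\mathscr{L}$-basis whatsoever exists; this handles the ``only if'' of (i) and one half of (b) $\Rightarrow$ (a) in (ii). For the remaining half of (b) $\Rightarrow$ (a), if all $a_i$ share a sign, say positive, then any $A \subseteq [n, +\infty)$ satisfies $\mathscr{L}(A^h) \subseteq [n \sum_i a_i, +\infty)$, so integers below this bound are unrepresented and no such $A$ can be an $\mathscr{L}$-basis.

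For the ``if'' direction of (i) and for (a) $\Rightarrow$ (b) in (ii), fix an enumeration $\mathbb{Z} = \{m_0, m_1, m_2, \ldots\}$, and in setting (ii) also fix $n \in \mathbb{Z}$. The plan is to build a nested chain of finite sets $\emptyset = A_{-1} \subseteq A_0 \subseteq A_1 \subseteq \cdots$ such that, for all $k$: (I1) every integer is represented at most once by $A_k$; (I2) each of $m_0, \ldots, m_k$ is represented exactly once by $A_k$; and (I3) in setting (ii), $A_k \subseteq [n, +\infty)$. The desired basis is then $A := \bigcup_k A_k$. At step $k+1$, if $m_{k+1}$ is already represented in $A_k$, set $A_{k+1} := A_k$; otherwise, I seek $h$ new integers $y_1, \ldots, y_h$ with $\mathscr{L}(\vec{y}) = m_{k+1}$ whose adjunction to $A_k$ preserves (I1)--(I3). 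Primitivity of $\mathscr{L}$ implies that the fiber $\Lambda := \mathscr{L}^{-1}(m_{k+1}) \subset \mathbb{Z}^h$ is a nonempty affine sublattice of rank $h - 1 \ge 1$; in setting (ii), the sign hypothesis guarantees that $\Lambda$ meets $[n, +\infty)^h$ in an infinite set.

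The central task is then to pick $\vec{y} \in \Lambda$ (with all coordinates $\ge n$ in (ii)) avoiding every bad coincidence. Any representation of any integer $m$ by $A_{k+1}$ that uses at least one new $y_j$ evaluates to $M + \sum_j b_j y_j$, where $M := \sum_{z_i \in A_k} a_i z_i$ is a quantity of bounded absolute value depending only on $A_k$, and $(b_1, \ldots, b_h) \in \mathbb{Z}^h$ records, for each $j$, the sum $\sum_{z_i = y_j} a_i$; only finitely many $(b_j)$ arise, and only the canonical one yields the intended representation $(y_1, \ldots, y_h)$ of $m_{k+1}$. Every bad event---a new representation colliding with a value already represented by $A_k$, two inequivalent new representations collapsing to the same integer, or an inequivalent second representation of $m_{k+1}$---forces $\vec{y}$ to satisfy a linear equation $\sum_j b_j' y_j = C$ for some bounded $C$ and some $(b_j')$ differing from the canonical vector. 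Each such equation either cuts $\Lambda$ in a proper affine subsublattice or is inconsistent with $\vec{y} \in \Lambda$; since there are only finitely many bad equations, a $\vec{y} \in \Lambda$ avoiding all of them, with $\min_j |y_j|$ arbitrarily large, is readily found.

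The main obstacle in fleshing out this sketch is the bookkeeping in the inductive step: one must enumerate the bad events correctly and verify that each genuinely constrains $\vec{y}$ to a proper affine subsublattice of $\Lambda$. The subtle point comes from the equivalence relation (1.6) on representations, which identifies tuples $(z_1, \ldots, z_h)$ differing only by reshuffling equal-valued positions whose coefficients redistribute appropriately; hence the count of distinct bad events must be taken at the level of equivalence classes, not ordered tuples. A short verification additionally confirms that genericity of $\vec{y}$ makes the intended representation of $m_{k+1}$ a single equivalence class, not equivalent to any representation using only the old elements of $A_k$.
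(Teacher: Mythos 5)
Your overall strategy (greedy construction, new elements chosen generically on the fiber $\Lambda=\mathscr{L}^{-1}(m_{k+1})$, finitely many ``bad'' linear conditions to avoid) is the same as the paper's, and your treatment of the necessity directions is fine. But there is a genuine gap in the key assertion that every bad event forces an equation $\sum_j b_j' y_j = C$ which ``either cuts $\Lambda$ in a proper affine subsublattice or is inconsistent with $\vec{y}\in\Lambda$.'' When the bad event comes from comparing two representations with coefficient vectors $b^{(1)},b^{(2)}$ on the new elements, the relevant vector is the difference $c=b^{(1)}-b^{(2)}$, and by primitivity the degenerate case is $c=\lambda(a_1,\dots,a_h)$ with $\lambda\in\mathbb{Z}$. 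For $\lambda=\pm1$ the equation restricted to $\Lambda$ is not a proper sublattice: it is either vacuous or \emph{identically satisfied}, and in the latter case no choice of $\vec{y}$ avoids the collision. Concretely, for $\mathscr{L}_0: x_1-x_2$, if $A_k$ contains $z,z'$ with $z-z'=m_{k+1}$ and you adjoin any $y_1,y_2$ with $y_1-y_2=m_{k+1}$, then $(y_1,z)$ and $(y_2,z')$ are two inequivalent representations of the same integer $y_1-z$, for \emph{every} point of $\Lambda$. Neither of your two advertised alternatives covers this.

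What rescues the construction is that such an identically-satisfied collision can only occur if $m_{k+1}$ (or $\pm m_{k+1}$) was \emph{already} represented by the previously built set, contradicting the choice of $m_{k+1}$; but extracting an actual representation of $m_{k+1}$ inside $A_k$ from the colliding configuration is a nontrivial combinatorial step (the paper's Claim 2, with its decomposition of the index set into chains $S_{l,j}$), and it is precisely the technical heart of the proof for partition irregular forms --- for partition regular forms the degenerate case cannot arise, which is why the paper singles them out in Remark 2.4. There is also the second degenerate family $c=0$ (same $b$-vector and same old-element contribution $M_1=M_2$ but a priori inequivalent old parts), which your sketch implicitly dismisses via ``counting at the level of equivalence classes''; ruling it out requires the substitution trick of replacing all new entries by a common old element and invoking the inductive hypothesis that $B_{k-1}$ represents every integer at most once. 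Until these two degenerate cases are handled, the inductive step is not established.
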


\begin{proof}
We concentrate on proving part (i) :  the proof of part (ii) will then be 
an immediate 
consequence of our approach. 
Clearly there no $\mathscr{L}$-basis if $\mathscr{L}$ is
imprimitive, so suppose $\mathscr{L}$ is primitive.
The result is trivial if $\mathscr{L} = \mathscr{L}_{\pm 1}$, 
so we may assume that $\mathscr{L}$ is a function of at least two
variables. We find it
convenient to use the slightly unusual 
notation $\mathscr{L} = \mathscr{L}_{a_{1},...,a_{h+1}}$,
where
$h \geq 1$. Henceforth, we deal with a fixed form $\mathscr{L}$, so $h$ and the
coefficients $a_{i}$ are fixed. Our task is to construct a unique
representation basis
$A$ for $\mathscr{L}$. Let $d_{0}$ be any non-zero integer and put $A_{0} :=
\{d_{0}\}$. We will construct the set $A$ step-by-step as
\be
A = \bigsqcup_{k=0}^{\infty} A_{k},
\ee
where, for each $k>0$, the set $A_{k}$ will consist of $h+1$ suitably chosen
integers, which we denote as
\be
A_{k} = \{d_{k,1},...,d_{k,h},e_{k}\}.
\ee
We adopt the following ordering of the integers :
\be
0,1,-1,2,-2,3,-3,...,
\ee
and denote the ordering by $\mathscr{O}$. For each $k >0$ the elements of
$A_{k}$ will be chosen so that
\par {\bf (I)} $A_{k}$ represents the least integer $t_{k}$ in the
ordering $\mathscr{O}$ not already represented by $B_{k-1} := \sqcup_{j
=0}^{k-1} A_{j}$,
\par {\bf (II)} no integer is represented more than once by $B_k$.
\\
\\
Since the set $B_{0}=A_{0}$ clearly already satisfies property {\bf (II)},
it is then clear that if both {\bf (I)} and {\bf (II)} are satisfied for
every $k>0$, then the set $A$ given by (2.5) will be a unique 
representation basis.
\\
\\
Since $\mathscr{L}$ is primitive, it represents 1. Fix a choice
$(s_{1},...,s_{h+1})$ of a representation of 1. Let $M$ be a fixed, very
large positive real number (how large $M$ needs to be will become clear in
what follows).
\\
\\
Fix $k > 0$. Suppose $A_{0},...,A_{k-1}$ have already been chosen in order
to satisfy {\bf (I)} and {\bf (II)}.
Let $t_{k}$ be the least integer in the ordering
$\mathscr{O}$ not represented by $B_{k-1}$. First choose any $h$ positive
numbers $\delta_{k,1},...,\delta_{k,h}$ such that
\be
{\delta_{k,1} \over d_{k-1,h}} > M, \;\;\; {\delta_{k,i+1} \over
\delta_{k,i}} > M, \;\;\; {\hbox{for $i=1,...,h-1$,}}
\ee
and put
\be
\epsilon_{k} := - \lfloor {1 \over a_{h+1}} \left( \sum_{i=1}^{h} a_{i}
\delta_{k,i} \right) \rfloor.
\ee
Let
\be
\sum_{i=1}^{h} a_{i}\delta_{k,i} + a_{h+1}e_{k} := u_{k} \in [0,a_{h+1}],
\ee
and choose the elements of $A_{k}$ as
\be
(d_{k,1},...,d_{k,h},e_{k}) :=
(\delta_{k,1},...,\delta_{k,h},\epsilon_{k}) + (t_{k}-u_{k}) \cdot
(s_{1},...,s_{h+1}).
\ee
Our choice immediately guarantees that {\bf (I)} is satisfied. The
remainder of the proof  
is concerned with showing that {\bf (II)} still holds
provided the integer $M$ is sufficiently large. This is done by
establishing the following two claims :
\\
\\
{\sc Claim 1} : {\em Let $(x_{1},...,x_{h+1}), (y_{1},...,y_{h+1})$ be any
two $(h+1)$-tuples of integers in $B_{k}$. Then exactly one of the
following holds :
\par (i) \be
\sum_{x_{i} \in A_{k}} a_{i} x_{i} \equiv_{\mathscr{L}}
\sum_{y_{i} \in A_{k}} a_{i} y_{i},
\ee
\par (ii) the difference
\be
\sum_{i=1}^{h+1} a_{i} x_{i} - \sum_{i=1}^{h+1} a_{i} y_{i}
\ee
is much larger in absolute value than any integer represented by
$B_{k-1}$,
\par (iii) \be
\sum_{x_{i} \in A_{k}} a_{i} x_{i} - \sum_{y_{i} \in A_{k}}
a_{i} y_{i} \equiv_{\mathscr{L}} \pm \left( \sum_{i=1}^{h} a_{i} d_{k,i}
+ a_{h+1} e_{k} \right).
\ee }
{\sc Claim 2} : {\em Suppose (iii) holds in Claim 1. Then
\be
\sum_{y_{i} \not\in A_{k}} a_{i} y_{i} -
\sum_{x_{i} \not\in A_{k}} a_{i} x_{i} \equiv_{\mathscr{L}}
\pm \sum_{i=1}^{h+1} a_{i} z_{i},
\ee
for some $(h+1)$-tuple $(z_{1},...,z_{h+1})$ of integers in
$B_{k-1}$.}
\\
\\
Indeed, suppose that $(x_{1},...,x_{h+1})$ and $(y_{1},...,y_{h+1})$ are any
two $(n+1)$-tuples in $B_{k}$ such that
\be
\sum_{i=1}^{n+1} a_{i} x_{i} = \sum_{i=1}^{n+1} a_{i} y_{i} = T, \;
{\hbox{say}}.
\ee
Then either (i) or (iii) in Claim 1 holds. But if (iii) holds then Claim 2
gives the contradiction that the integer $t_{k}$ is already represented by
$B_{k-1}$. Suppose (i) holds. Let $z$ be any element of $B_{k-1}$ and put
\be
x^{\prime}_{i} := \left\{ \begin{array}{lr} z, & {\hbox{if $x_{i} \in
A_{k}$}}, \\ x_{i}, & {\hbox{if $x_{i} \in B_{k-1}$}}, \end{array} \right.
\;\;\;
y^{\prime}_{i} := \left\{ \begin{array}{lr} z, & {\hbox{if $y_{i} \in
A_{k}$}}, \\ y_{i}, & {\hbox{if $y_{i} \in B_{k-1}$}}. \end{array} \right.
\ee
Then, since $B_{k-1}$ represents every integer at most once, we must have
that $(x^{\prime}_{1},...,x^{\prime}_{h+1})$ and
$(y^{\prime}_{1},...,y^{\prime}_{h+1})$ are equivalent representations of
$T$.
But then $(x_{1},...,x_{h+1})$ and $(y_{1},...,y_{h+1})$ are also
equivalent representations of $T$, so $A_{k}$ satisfies {\bf (II)} in this
case also.
\\
\\
{\sc Proof of Claim 1} : To simplify notation, put
\be
w_{i} := d_{k,i} \;\; {\hbox{for $i = 1,...,h$}}; \;\;\;\; w_{h+1} := e_{k}.
\ee
Consider the difference
\be
\sum_{x_{i} \in A_{k}} a_{i} x_{i} - \sum_{y_{i} \in A_{k}} a_{i} y_{i}
:= \sum_{i=1}^{h+1} c_{i} w_{i},
\ee
where
\be
c_{i} := \sum_{x_{u} = w_{i}} a_{u} - \sum_{y_{v} = w_{i}} a_{v}.
\ee
Alternative (i) trivially holds if all $c_{i} = 0$, so
so we may assume that some $c_{i} \neq 0$.
\par First suppose $c_{h+1} = 0$ and let $j \in [1,h]$ be the largest
index for
which $c_{j} \neq 0$. Then, for $M \gg 0$,
it is clear that the left-hand side of (2.18) is $\Theta(d_{k,j})$ and
hence alternative (ii) holds.
\par So finally we may suppose that $c_{h+1} \neq 0$. Let $f \in
\mathbb{Q}$ be such that $c_{h+1} = f \cdot a_{h+1}$. Then
\be
c_{h+1} w_{h+1} = - f a_{h} w_{h} + \Psi_{h},
\ee
where, for $M \gg 0$, the $\lq$error term' $\Psi_{h}$ must be much smaller
in absolute value than the $\lq$leading term' $- fa_{h} w_{h}$. Thus
alternative (ii) will hold unless $c_{h} = fa_{h}$. But then
\be
c_{h} w_{h} + c_{h+1}w_{h+1} = - fa_{h-1} w_{h-1} + \Psi_{h-1},
\ee
where, once again, for $M \gg 0$, the term $\Psi_{h-1}$ must be much
smaller in absolute value than $fa_{h-1} w_{h-1}$. Hence alternative (ii)
holds unless $c_{h-1} = fa_{h-1}$ and, by iteration of the same argument,
unless $c_{i} = fa_{i}$ for $i = 1,...,h$. In that case we thus have that
\be
\sum_{x_{i} \in A_{k}} a_{i} x_{i} -
\sum_{y_{i} \in A_{k}} a_{i} y_{i} \; \equiv_{\mathscr{L}} \;\; 
f \left( \sum_{i=1}^{n+1} a_{i} w_{i} \right).
\ee
But $f \cdot a_{i} \in \mathbb{Z}$
for $i = 1,...,h+1$ and since $\mathscr{L}$ is primitive, this implies that
$f \in \mathbb{Z}$. But then it is clear that we must have $|f| = 1$
and hence that alternative (iii) holds.
\par This completes the proof of Claim 1.
\\
\\
{\sc Proof of Claim 2} :
Without loss of generality we may assume that
\be
\sum_{x_{i} \in A_{k}} a_{i} x_{i} - \sum_{y_{i} \in A_{k}} a_{i} y_{i}
\equiv_{\mathscr{L}} \sum_{i=1}^{h+1} a_{i} w_{i},
\ee
and now need to construct an $(h+1)$-tuple $(z_{1},...,z_{h+1})$ of
integers in $B_{k-1}$ such that
\be
\sum_{y_{i} \in B_{k-1}} a_{i} y_{i} - \sum_{x_{i} \in B_{k-1}} a_{i} x_{i}
\equiv_{\mathscr{L}} \sum_{i=1}^{h+1} a_{i} z_{i}.
\ee
Let $i_{1} < i_{2} < \cdots < i_{m}$ be the indices for which $x_{i} \in
B_{k-1}$. We shall decompose the index set $\{1,...,h+1\}$ as the
disjoint union of $m+1$ subsets $S_{1},...,S_{m+1}$ defined as follows :
\\
\\
Fix $l$ with $1 \leq l \leq m$. Set $S_{l,0} := \{i_{l}\}$. For each
$j > 0$ set
\be
S_{l,j} := \{ i : x_{i} \in \{w_{k}\}_{k \in S_{l,j-1}} \}.
\ee
Noting that the sets $S_{l,j}$ are pairwise
disjoint for different $j$ and hence empty
for all $j \gg 0$, we set
\be
S_{l} := \bigsqcup_{j} S_{l,j}.
\ee
It is also easy to see that the sets $S_{1},...,S_{m}$ are pairwise disjoint.
We define
\be
S_{m+1} := \{1,...,h+1\} \backslash \bigsqcup_{l=1}^{m} S_{l}.
\ee
Note further that the sets $W_{1},...,W_{m+1}$ are pairwise disjoint, where
\be
W_{l} := \{ w_{i} : w_{i} = {\hbox{$x_{j}$ or $w_{j}$ for some $j \in
S_{l}$}}
\}, \;\;\;\; l = 1,...,m+1,
\ee
and that
\be
A_{k} = \{w_{1},...,w_{h+1}\} = \bigsqcup_{l=1}^{m+1} W_{l}.
\ee
Let $z$ be any element of $B_{k-1}$. We are now ready to define the
$(h+1)$-tuple $(z_{1},...,z_{h+1})$. Let $1 \leq i \leq h+1$. We put
\be
z_{i} := \left\{ \begin{array}{lr} y_{i}, & {\hbox{if $y_{i} \in
B_{k-1}$}}, \\
z, & {\hbox{if $y_{i} \in W_{m+1}$}}, \\ x_{i_{l}}, &
{\hbox{if $y_{i} \in W_{l}$, $1 \leq l \leq m$}}. \end{array} \right.
\ee
Then (2.24) now follows from (2.23), since the latter implies that
\be
\sum_{i \in S_{m+1}} a_{i} w_{i} \equiv_{\mathscr{L}} \sum_{i \in S_{m+1}}
a_{i} x_{i},
\ee
and, for $1 \leq l \leq m$, that
\be
\sum_{y_{i} \in W_{l}} a_{i} y_{i} \equiv_{\mathscr{L}}
\sum_{i \in S_{l}} a_{i} x_{i} - \sum_{i \in S_{l}} a_{i} w_{i} -
a_{i_{l}} x_{i_{l}}.
\ee
Hence the proof of Claim 2 is complete, and with it the proof of the first 
part of Theorem \ref{thm:unique}. It then follows immediately from the 
definitions (2.7), (2.8) and
(2.10) that, if the coefficients $a_{i}$ don't all have the same sign, then the
elements of $A$ in (2.4) can all be chosen to lie in any given half-line.
\end{proof}

\begin{rek}
If we were instead to work with ordered representations, then it is an 
immediate corollary of Theorem \ref{thm:unique} that there exists a 
unique representation basis for the form $a_1 x_1 + \cdots + a_h x_h$ if and
only if there do not exist two distinct subsets $I,I^{\prime}$ 
of $\{1,...,h\}$ such that 
\be
\sum_{i \in I} a_i = \sum_{i \in I^{\prime}} a_i.
\ee
This resolves Problem 16 in \cite{Na4}. 
\end{rek}

\begin{rek}\label{rek:spec}
The proof of Theorem \ref{thm:unique} simplifies considerably if the 
form $\mathscr{L}$ is partition regular, thus in particular in the case
of the forms $x_1 + \cdots + x_h$. See 
Remark \ref{rek:simp} below for an explanation. 
This is why we think our presentation streamlines those in earlier papers.  
\end{rek}


\setcounter{equation}{0}
\section{Partition regular forms}\label{sec:autos}

We wish to use the method of the previous section in order to generalise 
Theorem
\ref{thm:zbases}. As in 
\cite{Na4}, we adopt the following notations :
\be
\mathscr{F}_{0}(\mathbb{Z}) := \{f : \mathbb{Z} \rightarrow \mathbb{N}_0 \cup
\{\infty\} : 
f^{-1}(0) \; {\hbox{is finite}} \}, 
\ee
\be
\mathscr{F}_{\infty}(\mathbb{Z}) := 
\{f : \mathbb{Z} \rightarrow \mathbb{N}_0 \cup
\{\infty\} : 
f^{-1}(0) \; {\hbox{has asymptotic density zero}} \}.
\ee
Theorem \ref{thm:zbases} is a statement about $\mathscr{F}_{0}(\mathbb{Z})$. 
Only
minor modifications
to the method of Section 2 will be required, both to obtain a similar result 
for
general linear forms, and 
to extend the result to $\mathscr{F}_{\infty}(\mathbb{Z})$. We prepare the 
ground for this 
with a couple of lemmas. First some terminology :

\begin{defi}\label{defi:auto}
An {\em automorphism} of the linear form $\mathscr{L} : 
a_1 x_1 + \cdots + a_h x_h$
is a pair of 
functions $(\psi,\chi)$ from the set $\{x_1,...,x_h\}$ of variables to
$\{x_1,...,x_h\} \cup \{0\}$ such
that the linear form 
\be
\sum_{i=1}^{h} a_i \psi(x_i) - \sum_{i=1}^{h} a_i \chi(x_i)
\ee
is the same form as $\mathscr{L}$. The automorphism is said to be 
{\em trivial} if
$\chi \equiv 0$.
\end{defi}

Our first lemma is for the purpose of generalising Theorem \ref{thm:zbases} 
to other linear \\ forms :

\begin{lem}\label{lem:regular}
A linear form $\mathscr{L}$ is partition regular if and only if it possesses 
no non-trivial automorphisms.
\end{lem}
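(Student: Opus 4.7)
The reverse direction is a one-line construction: if $\sum_{i\in I}a_i=0$ for some non-empty $I\subseteq\{1,\ldots,h\}$, fix any $j_0$ and define $\psi=\mathrm{id}$ together with $\chi(x_i)=x_{j_0}$ for $i\in I$ and $\chi(x_i)=0$ otherwise. Then $\sum_i a_i\chi(x_i)=\bigl(\sum_{i\in I}a_i\bigr)x_{j_0}=0$, so the linear form $\sum_i a_i\psi(x_i)-\sum_i a_i\chi(x_i)$ equals $\mathscr{L}$; since $\chi\not\equiv 0$, this is a non-trivial automorphism.

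For the forward direction, assume $\mathscr{L}$ is partition regular and let $(\psi,\chi)$ be any automorphism. Put $P_j:=\{i:\psi(x_i)=x_j\}$ and $N_j:=\{i:\chi(x_i)=x_j\}$; the families $\{P_j\}_j$ and $\{N_j\}_j$ are each pairwise disjoint because $\psi$ and $\chi$ are functions. Equating the coefficient of each $x_j$ in the defining identity of the automorphism yields the per-variable system
\[
\sum_{i\in P_j}a_i-\sum_{i\in N_j}a_i=a_j\qquad(j=1,\ldots,h),
\]
and setting $P:=\bigsqcup_j P_j$, $N:=\bigsqcup_j N_j$, $P^{c}:=\{1,\ldots,h\}\setminus P$, summing over $j$ collapses this to the single relation
\[
\sum_{i\in P^{c}}a_i+\sum_{i\in N}a_i=0.
\]
When $P^{c}\cap N=\emptyset$ this is a bona fide subset-sum identity $\sum_{i\in P^{c}\cup N}a_i=0$, so partition regularity forces $P^{c}\cup N=\emptyset$ and in particular $N=\emptyset$, i.e.\ $\chi\equiv 0$, as required.

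The substantive case is $P^{c}\cap N\neq\emptyset$, where the indices in $P^{c}\cap N$ enter the global relation with coefficient $2$ and it is no longer an honest subset-sum. My plan here is a minimal-counterexample argument: among all non-trivial automorphisms of $\mathscr{L}$ (if any), choose one minimising $|N|=\sum_j|N_j|$. For $i_0\in P^{c}\cap N$ with $\chi(x_{i_0})=x_k$, the per-variable equation at $j=k$ allows one to re-express the subtracted contribution $-a_{i_0}x_k$ as a $\pm 1$-combination of the remaining $\psi$- and $\chi$-contributions, producing a modified automorphism $(\psi',\chi')$ with $i_0\notin N'$ and hence $|N'|<|N|$, contradicting minimality; the only alternative is that the combinatorial rigidity obstructing every such reduction forces the global relation to collapse, after the obvious cancellations, into a genuine non-empty subset-sum equal to zero, again contradicting partition regularity. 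The main obstacle will be the bookkeeping of the reduction step: keeping $\psi'$ and $\chi'$ functions and the defining identity intact while not re-introducing new elements to $N'$. A potentially cleaner packaging is induction on $h$, descending via a relation implicit in the per-variable system to a primitive linear form on fewer variables which inherits partition regularity, with the trivial base case $h=1$ where the only automorphism of $a_1x_1$ is $\psi=\mathrm{id},\chi\equiv 0$.
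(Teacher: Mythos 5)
Your first direction and the set-up of the second are fine, and your reduction to the relation $\sum_{i\in P^{c}}a_i+\sum_{i\in N}a_i=0$ is exactly the right computation. But the proof is not complete: everything hinges on the case $P^{c}\cap N\neq\emptyset$, which you only sketch, and neither of your proposed repairs (minimal counterexample, induction on $h$) can work, because the implication is actually \emph{false} in precisely that case. Take $\mathscr{L}=x_1-2x_2$, so $a_1=1$, $a_2=-2$; no non-empty subset of $\{1,-2\}$ sums to zero, so $\mathscr{L}$ is partition regular in the sense used here. Now set $\psi(x_1)=0$, $\psi(x_2)=x_2$, $\chi(x_1)=\chi(x_2)=x_1$. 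Then $\sum_i a_i\psi(x_i)-\sum_i a_i\chi(x_i)=-2x_2-(x_1-2x_1)=x_1-2x_2=\mathscr{L}$, so $(\psi,\chi)$ is a non-trivial automorphism. In your notation $P=\{2\}$, $N=\{1,2\}$, $P^{c}\cap N=\{1\}$, and the global relation is $2a_1+a_2=0$: a $\{0,1,2\}$-combination, not a subset sum. So the ``combinatorial rigidity'' you were hoping for never materialises; the honest conclusion of your argument is only that a non-trivial automorphism forces $\sum_i\epsilon_i a_i=0$ for some $\epsilon_i\in\{0,1,2\}$ not all zero, which is strictly weaker than having a vanishing subset sum.

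For what it is worth, the paper's own proof of this direction has exactly the same lacuna: after summing the per-variable identities it asserts that from (3.7) one can ``extract a non-empty subset of the coefficients summing to zero,'' silently assuming that the index sets on the two sides are disjoint; in the example above the index $1$ appears on both sides and acquires coefficient $2$. So you have in fact isolated a genuine error in the lemma (and hence a gap in the proof of Theorem \ref{thm:density}, which invokes it to dismiss case (iii) of Claim 1 for forms such as $x_1-2x_2$). The right move is not to push the bookkeeping harder but to record that only one implication holds as stated --- a form with a vanishing non-empty subset sum always has a non-trivial automorphism --- and that the converse requires either a stronger hypothesis (no $\{0,1,2\}$-combination of the coefficients vanishes) or a more restrictive notion of automorphism.
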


\begin{proof}
Denote $\mathscr{L} : a_1 x_1 + \cdots + a_h x_h$ as usual. First suppose
$\mathscr{L}$ is partition regular and thus, without loss of generality, that
$h \geq 2$ and $a_1 + \cdots + a_r = 0$ for some $2 \leq r \leq h$. Set 
\be
\psi(x_1) = 0, \;\;\; \psi(x_i) = x_i, \; i = 2,...,h, 
\ee
and 
\be
\chi(x_1) = 0, \;\;\; \chi(x_i) = x_1, \; i = 2,...,r, \;\;\; \chi(x_i) = 0, 
\; i = r+1,...,h.
\ee
Then one easily verifies that $(\psi,\chi)$ is a non-trivial automorphism 
of $\mathscr{L}$. 
\par Conversely, let $(\psi,\chi)$ be a non-trivial automorphism of 
$\mathscr{L}$. For each $i = 1,...,h$, (3.3) yields an equation 
between coeffcients of the form 
\be 
a_i = \sum_{j \in X_i} a_j - \sum_{j \in Y_i} a_j,
\ee
where $X_i$ and $Y_i$ are subsets of $\{1,...,h\}$. The definition of 
automorphism means that every index $j \in \{1,...,h\}$ occurs in 
at most one of the $X_i$ and at most one of the $Y_i$. Non-triviality
means that there is at least one $i$ such that $(X_i,Y_i) \neq 
(\{i\},\phi)$. Without loss of generality, suppose that
$(X_i,Y_i) \neq (\{i\},\phi)$ for $i = 1,...,r$ only, and some $r \leq h$. 
Adding together the left and right hand sides of (3.6) for $i = 1,...,r$
yields an equation of the form 
\be
a_1 + \cdots + a_r = \sum_{j \in X} a_j - \sum_{j \in Y} a_j,
\ee
for some disjoint subsets $X$ and $Y$ of $\{1,...,h\}$, with 
$X \subseteq \{1,...,r\}$. From (3.7) we can
extract a non-empty subset of the coefficients summing to zero, 
except if $X = \{1,...,r\}$ and $Y = \phi$. But it is easily seen that the
latter is impossible when $\chi \not\equiv 0$.  
\end{proof}

The next lemma is for the purpose of extending our results to
$\mathscr{F}_{\infty}(\mathbb{Z})$ : 

\begin{lem}\label{lem:density}
Let $S \subseteq \mathbb{Z}$ such that $d(S) = 1$. Let $l,m,p$ 
be any three positive integers. For each $n \in \mathbb{Z}$ set
\be
X_{l,m,p}(n) := \mathbb{Z} \cap 
\{\frac{a}{b} n+ c : (a,b,c) \in \mathbb{Z}^3 \cap \left( [-l,l] 
\times \pm[1,m] \times [-p,p] \right) \},
\ee
and set 
\be
S_{l,m,p} := \{n : X_{l,m,p}(n) \subseteq S \}.
\ee
Then $d(S_{l,m,p}) = 1$.
\end{lem}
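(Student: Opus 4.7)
The plan is to write the complement as a finite union of bad sets, one for each triple $(a,b,c)$, and show each has density zero. Specifically, by definition $n \notin S_{l,m,p}$ if and only if there is some $(a,b,c) \in \mathbb{Z}^3$ with $|a| \leq l$, $1 \leq |b| \leq m$, $|c| \leq p$ such that $\frac{a}{b}n + c$ is an integer lying in $\mathbb{Z} \setminus S$. Writing
\begin{equation}
T_{a,b,c} := \left\{ n \in \mathbb{Z} : \frac{a}{b} n + c \in \mathbb{Z} \setminus S \right\},
\end{equation}
we have $\mathbb{Z} \setminus S_{l,m,p} = \bigcup_{(a,b,c)} T_{a,b,c}$, a finite union. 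So it suffices to prove $d(T_{a,b,c}) = 0$ for each individual triple.

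For a fixed triple $(a,b,c)$ with $a \neq 0$, I would analyse $T_{a,b,c}$ via the affine map $n \mapsto \frac{a}{b}n + c$. After dividing out the common factor, write $\gcd(a,b) = d$, $a = d a'$, $b = d b'$ with $\gcd(a', b') = 1$ (and WLOG $b' > 0$). Then $\frac{a}{b} n + c \in \mathbb{Z}$ forces $b' \mid n$; setting $n = b' n'$ the value becomes $a' n' + c$. Thus counting $n \in T_{a,b,c} \cap [-N,N]$ reduces to counting $n' \in [-N/b', N/b']$ for which $a' n' + c \in \mathbb{Z} \setminus S$. Since $n' \mapsto a' n' + c$ is a bijection onto its image (an arithmetic progression with common difference $|a'|$), this count is bounded above by
\begin{equation}
\left| (\mathbb{Z} \setminus S) \cap \left[ -\tfrac{l}{1}N - p, \; \tfrac{l}{1} N + p \right] \right|,
\end{equation}
which is $o(N)$ as $N \to \infty$ because $\mathbb{Z} \setminus S$ has density zero. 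Dividing by $2N+1$ gives $d(T_{a,b,c}) = 0$.

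The key mechanism is simply that a non-degenerate affine map pulls back a density-zero set to a density-zero set, and the counting argument is entirely routine once one observes that the integrality condition $b \mid a n$ selects an arithmetic progression of $n$'s. The residual case $a = 0$ is degenerate: $\frac{a}{b}n + c = c$ is independent of $n$, so $T_{0,b,c}$ is either empty or all of $\mathbb{Z}$ depending on whether $c \in S$. This is the only potentially non-routine point, and it is handled under the (harmless, in the applications in Section 3) convention that we replace $S$ by $S \cup [-p,p]$ if necessary, which does not alter its density. Taking the finite union over all admissible triples concludes that $d(\mathbb{Z} \setminus S_{l,m,p}) = 0$, i.e.\ $d(S_{l,m,p}) = 1$.
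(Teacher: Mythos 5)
Your proof is correct and is essentially the paper's own argument: the paper likewise reduces the claim to the facts that a finite intersection of density-one sets has density one and that, for fixed $a,b,c$ with $b \neq 0$, the set $\{n \in \mathbb{Z} : \frac{a}{b}n + c \in S \cup (\mathbb{Q}\setminus\mathbb{Z})\}$ has density one, which is exactly your affine-pullback computation. Your explicit handling of the degenerate case $a = 0$ (where the lemma as literally stated can fail if some integer of $[-p,p]$ lies outside $S$) is in fact more careful than the paper, which passes over this point in silence.
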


\begin{proof}
The proof follows immediately from the following two facts :
\par (i) the intersection of finitely many sets of asymptotic 
density one has the same property
\par (ii) since $S$ has density one, the same is true, 
for any fixed integers $a,b,c$, with $b \neq 0$, of the set 
\be
\{n \in \mathbb{Z} : \frac{a}{b} n+ c 
\in S \cup (\mathbb{Q} \backslash \mathbb{Z}) \}
\ee 
\end{proof}

We are now ready to state the main result of this section :

\begin{thm}\label{thm:density}
Let $\mathscr{L} : a_1x_1 + \cdots + a_h x_h$ be any partition regular linear 
form. Then for any $f \in \mathscr{F}_{\infty}(\mathbb{Z})$, there 
exists a subset $A \subseteq \mathbb{Z}$ such that $f_{A,\mathscr{L}} = f$.
\end{thm}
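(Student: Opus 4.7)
The strategy is to mimic the iterative construction from the proof of Theorem \ref{thm:unique}, building $A = \bigsqcup_{k \geq 0} A_k$ step by step and extending the representation count of carefully chosen targets $t_k$. Set $A_0 = \{d_0\}$ for some nonzero $d_0$, and order the integers as $0, 1, -1, 2, -2, \ldots$ (call this order $\mathscr{O}$). At step $k \geq 1$, with $B_{k-1} := \bigsqcup_{j < k} A_j$ already constructed, let $t_k$ be the least integer in $\mathscr{O}$ with $f_{B_{k-1}, \mathscr{L}}(t_k) < f(t_k)$; in particular $f(t_k) \geq 1$, so $t_k \notin f^{-1}(0)$. A standard argument on the order $\mathscr{O}$ shows that across all $k$ the targets $t_k$ exhaust $\mathbb{Z} \setminus f^{-1}(0)$, each $n \notin f^{-1}(0)$ appearing as a $t_k$ exactly $f(n)$ times (interpreting $f(n) = \infty$ as infinitely often). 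Take $A_k$ to be a set of $h$ integers $\{w_1, \ldots, w_h\}$ chosen by the same recipe as formulas (2.7)--(2.10): pick rapidly growing positive $\delta_{k,1}, \ldots, \delta_{k,h-1}$ and a compensator $\epsilon_k$ satisfying the appropriate growth hierarchy, then shift the tuple so that $\sum_i a_i w_i = t_k$.

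Two new ingredients are required beyond Theorem \ref{thm:unique}: (a) $f$ is no longer identically $1$, and (b) $f^{-1}(0)$ need only have density zero rather than be empty. Point (a) is absorbed into the choice of $t_k$. For (b), observe that when we adjoin $A_k$, along with the intended new representation of $t_k$ we create a bounded number (depending only on $h$ and on the finite set $B_{k-1}$) of ``accidental'' new equivalence classes of representations of various large integers $n$ obtained by mixing $A_k$ and $B_{k-1}$ elements. Each such accidental $n$ has the shape $(a/b)\delta_{k,i} + c$ for bounded integers $a, b, c$ determined by the coefficients of $\mathscr{L}$ and $B_{k-1}$. We therefore apply Lemma \ref{lem:density} to the density-one set $S := \mathbb{Z} \setminus f^{-1}(0)$, with $l, m, p$ large enough to encompass every arithmetic combination arising at step $k$; the set $S_{l,m,p}$ still has density one, so we can select each $\delta_{k,i}$ inside $S_{l,m,p}$ while respecting the growth conditions. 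This guarantees $f(n) \geq 1$ for every accidental target $n$ at step $k$.

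The verification that $f_{B_k, \mathscr{L}} \leq f$ is preserved inductively follows Claims 1 and 2 of Section \ref{sec:unique} essentially verbatim. Given two tuples in $B_k$ representing the same $n$, Claim 1 yields the same trichotomy, with alternative (ii) excluded because the two sums agree. In alternative (i) the $A_k$-parts match up to $\mathscr{L}$-equivalence, so pairing with the induction hypothesis on $B_{k-1}$ introduces no excess classes. In alternative (iii), Claim 2 exhibits an existing $B_{k-1}$-representation of $t_k$; in Section \ref{sec:unique} this yielded a contradiction, but here it simply accounts for one of the $f_{B_{k-1}}(t_k) < f(t_k)$ preexisting classes. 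Thus after step $k$, $t_k$ has exactly $f_{B_{k-1}}(t_k) + 1 \leq f(t_k)$ classes. Because the growth hierarchy is geometric, the accidental targets at step $k$ live in a range of magnitudes strictly larger than at any earlier step, so each accidental $n$ arises as such for at most one $k$ and never exceeds the $f(n) \geq 1$ guaranteed above.

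The main obstacle is this bookkeeping, and partition regularity is precisely what makes it go through. By Lemma \ref{lem:regular}, partition regularity is equivalent to the absence of non-trivial automorphisms of $\mathscr{L}$; were such an automorphism $(\psi, \chi)$ to exist, the intended representation $\sum_i a_i w_i = t_k$ would spawn additional non-equivalent representations of $t_k$ (or of related integers) using the same $A_k$-elements, inflating the count by more than one per step and destroying the invariant $f_{B_k, \mathscr{L}} \leq f$. Without such automorphisms, the intended representation is essentially the only new equivalence class of $t_k$ created at step $k$. Combined with Lemma \ref{lem:density} to dodge $f^{-1}(0)$, this yields the required equality $f_{A, \mathscr{L}} = f$.
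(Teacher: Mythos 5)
Your proposal follows essentially the same route as the paper's proof: the step-by-step construction of Section \ref{sec:unique}, with Lemma \ref{lem:density} used to steer the ``accidental'' large representations away from $f^{-1}(0)$, and partition regularity (via Lemma \ref{lem:regular}) used to dispose of case (iii) of Claim 1. Two points deserve comment.

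First, a concrete flaw in your scheduling. You take $\mathscr{O}$ to be the ordering $0,1,-1,2,-2,\dots$ of $\mathbb{Z}$ and define $t_k$ as the least integer in $\mathscr{O}$ with $f_{B_{k-1},\mathscr{L}}(t_k)<f(t_k)$. Since $f$ is allowed to take the value $\infty$, this rule can stall: if $f(0)=\infty$, say, then $t_k=0$ for every $k$ and no other integer is ever targeted, so the ``standard argument'' you invoke for exhaustion fails for this ordering. The paper avoids this by well-ordering the \emph{multiset} $\mathscr{M}$ containing $f(n)$ copies of each $n$ (with $\infty$ meaning countably many copies) and targeting its elements in that order; any dovetailing device works, but some such device is needed.

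Second, your treatment of case (iii) is internally inconsistent: you first assert that in case (iii) the output of Claim 2 merely ``accounts for one of the preexisting classes'' of $t_k$ --- but the danger in case (iii) is an excess of inequivalent representations of the common value $T$ of the two tuples, which this accounting does not address --- and only afterwards invoke partition regularity. The correct (and the paper's) statement is that for a partition regular form case (iii) cannot occur non-trivially, because the coefficient identities $c_i=\pm a_i$ arising there encode a non-trivial automorphism of $\mathscr{L}$, which Lemma \ref{lem:regular} forbids; Claim 2 is then not needed at all (cf.\ Remark \ref{rek:simp}). On the plus side, your observation that the accidental targets at step $k$ occupy a range of magnitudes disjoint from those of earlier steps, so that no integer is accidentally represented at two different steps, makes explicit a detail the paper leaves implicit in the choice of $M_k$.
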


\begin{proof}
Let a partition regular $\mathscr{L} : a_1 x_1 + \cdots + a_h x_h$ and 
$f \in \mathscr{F}_{\infty}(\mathbb{Z})$ be given : 
we shall show how to construct 
$A \subseteq \mathbb{Z}$ with $f_{A,\mathscr{L}} = f$. Let $\mathscr{M}$ be 
the multisubset of $\mathbb{Z}$ consisting of $f(n)$ repititions of $n$ for 
every $n$. The problem amounts to constructing a $\lq$unique representation 
basis' for $\mathscr{M}$. This is some 
countable set : let $\mathscr{O} = \{\tau_{1},\tau_{2},...\}$ be any 
well-ordering of it. 
We now construct $A$ step-by-step as in (2.4)-(2.10). This time the ordering
$\mathscr{O}$ is as just defined above. We'll have $t_k = \tau_{k^{\prime}}$
for some $k^{\prime}$ depending on $k$. Two requirements must be satisfied 
when we tag on the numbers $d_{k,1},...,d_{k,h},e_k$ to our set $A$ :
\\
\\
{\bf I.} No new representation is created of any number appearing 
before $t_k$ in the ordering $\mathscr{O}$. 
\\
{\bf II.} No representation is created of any integer $n$ for which 
$f(n) = 0$. 
\\
\\
To satisfy these requirements, the integer denoted $M$ in (2.7) will now have 
to depend on $k$. To begin with this is because, when considering {\bf I}, 
since the ordering $\mathscr{O}$ is chosen randomly, we have no control over 
how quickly the sizes of numbers in this ordering grow as ordinary integers. 
Clearly, $M = M_k$ can be chosen large enough to take account of this 
difficulty in the sense that Claim 1 holds as before. 
We still need to rule out case (iii) of that claim occurring, and it is here 
that we make use of the assumption that $\mathscr{L}$ is partition regular, 
for (2.13) describes an automorphism of $\mathscr{L}$, just as long as the 
numbers $d_{k,1},...,d_{k,h},e_k$ are distinct. 
\par The only remaining problem is thus {\bf II}. 
Let $S := \mathbb{Z} \backslash f^{-1}(0)$. By assumption
$d(S) = 1$. Then it follows from Lemma \ref{lem:density} that 
there exists a choice of a sufficiently large $M = M_k$ which will mean that 
{\bf II} is indeed satisfied.
Indeed once $d_{k,1},...,d_{k,h-1}$ have been chosen with due 
regard to {\bf I}, one just  
needs to choose $d_{k,h}$ to also lie in a set $S_{l,m,p}$, where 
$l,m,p$ are fixed integers depending a priori on all of the numbers 
$d_{k,1},...,d_{k,h-1},t_k,a_1,...,a_{h+1},s_1,...,s_{h+1}$. 
\par Thus there is indeed a choice of $M_k$ that works at each step, and the 
theorem is proved.
\end{proof}

\begin{rek}\label{rek:simp}
The proof of Theorem \ref{thm:unique} simplifies for partition regular 
forms in the same way as in the argument just presented. Namely, we can 
ignore Case (iii) of \\ Claim 1, and thus don't need the most technical
part of the proof, which is the proof of Claim 2. 
\end{rek}   
  
\setcounter{equation}{0}
\section{The form $x_1 - x_2$}\label{sec:diff}

We do not know if there exist any partition irregular forms for which 
Theorem \ref{thm:density} still holds. For the simplest such form, namely
$x_1 - x_2$, this is clearly not the case. Henceforth we denote this form
by $\mathscr{L}_{0}$. Obviously any function $f$ represented by $\mathscr{L}_0$
must be even and satisfy $f(0) = 1$. There is a more serious
obstruction, however. Let $f : \mathbb{Z} \rightarrow \mathbb{N}_0 \cup 
\{\infty\}$ and suppose $f(n) \geq 3$ for some $n$. Suppose 
$f_{A,\mathscr{L}_0} = f$ for some $A \subseteq \mathbb{Z}$ and let 
$a_1,...,a_6 \in A$ be such that 
\be
a_1 - a_2 = a_3 - a_4 = a_5 - a_6 = n,
\ee
are three pairwise non-equivalent representations of $n$ (i.e.: the 
numbers $a_1,a_3,a_5$ are distinct). Then we also have the equalities
\be
a_1 - a_3 = a_2 - a_4, \;\;\; a_3 - a_5 = a_4 - a_6, \;\;\;
a_1 - a_5 = a_2 - a_6,
\ee
and at least one of these three differences must be different from $n$. Thus
there exists some other number $m$ for which $f(m) \geq 2$. 
\par The following definition captures this kind of condition imposed on
a function $f$ representable by $\mathscr{L}_0$ :
 
\begin{defi}\label{defi:plentiful}
Let $f : \mathbb{Z} \rightarrow \mathbb{N}_0 \cup \{\infty\}$. 
A sequence (finite or infinite)
$s_{1},s_{2},s_{3},...$ of positive integers is said to be
{\em plentiful} for
$f$ if, for every pair $l \leq m$ of positive integers, we have
\be
f \left( \sum_{i=l}^{m} s_{i} \right) > 1.
\ee
\end{defi}

The main result of this section is the following :

\begin{thm}\label{thm:xminusy} 
Let $f \in \mathscr{F}_{0}(\mathbb{Z})$ 
be even with $f(0) = 1$. 
\par (i) If $f^{-1}(\infty) \neq \phi$ then $f$ is representable by 
$\mathscr{L}_0$ if and only if there exists an infinite 
plentiful sequence for $f$.
\par (ii) If $f^{-1}(\infty) = \phi$ but $f$ is unbounded, then $f$ is 
representable by $\mathscr{L}_0$ if and only if there exist arbitrarily 
long plentiful sequences for $f$.
\end{thm}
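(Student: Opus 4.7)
For necessity in both parts, suppose $f = f_{A,\mathscr{L}_{0}}$ and that $f$ is unbounded. For each $k \geq 1$ choose $n_{k}$ with $f(n_{k}) \geq k+1$; then $A$ contains $k+1$ distinct ``upper'' elements $b_{1} < b_{2} < \cdots < b_{k+1}$ with $b_{i} - n_{k} \in A$ for every $i$. Setting $s_{i} := b_{i+1} - b_{i}$ for $i = 1,\ldots,k$, the identity $\sum_{i=l}^{m} s_{i} = b_{m+1} - b_{l}$ together with the fact that $(b_{m+1}, b_{l})$ and $(b_{m+1} - n_{k}, b_{l} - n_{k})$ are two inequivalent $\mathscr{L}_{0}$-representations of this value shows that $f$ is at least $2$ on every consecutive partial sum, so $(s_{1},\ldots,s_{k})$ is plentiful of length $k$. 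For part (ii) this is exactly the required arbitrarily long plentiful sequences. For part (i) one runs the same argument with $n_{k}$ held fixed equal to some $N \in f^{-1}(\infty)$ and lets $k \to \infty$ to extract an infinite plentiful sequence.

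For sufficiency I would follow the incremental strategy of Sections~\ref{sec:unique}--\ref{sec:autos}. Enumerate all ``representation requests'' $(n,j)$ with $1 \leq j \leq f(n)$ (letting $j = 1,2,3,\ldots$ when $f(n) = \infty$) in a diagonal order. At step $k$, while handling request $(n_{k}, j_{k})$, I would adjoin a pair $(u, u - n_{k})$ to the current finite $A$, with $u$ far larger in absolute value than any element currently in $A$. The central arithmetic observation is that if $(u_{1}, u_{1} - n_{k}),\ldots,(u_{j_{k}-1}, u_{j_{k}-1} - n_{k})$ are the pairs placed at earlier steps devoted to $n_{k}$, then adjoining $(u, u - n_{k})$ creates exactly \emph{two} new inequivalent representations at each value $u - u_{i}$, namely $(u, u_{i})$ and $(u - n_{k}, u_{i} - n_{k})$, whereas only \emph{one} new representation appears at each other affected value.

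The step is therefore admissible provided $f(u - u_{i}) \geq 2$ for every $i < j_{k}$, and this is precisely what the plentiful hypothesis supplies. Choose the $u_{i}$ inductively so that $u_{i+1} - u_{i} = s_{i}^{(n_{k})}$, where $(s_{i}^{(n_{k})})$ is a plentiful segment reserved for $n_{k}$. Then $u - u_{i} = s_{i}^{(n_{k})} + \cdots + s_{j_{k}-1}^{(n_{k})}$ is a consecutive sum of a plentiful sequence, so $f(u - u_{i}) \geq 2$. For part (i) each $N \in f^{-1}(\infty)$ requires an infinite reserved segment; since plentifulness depends only on the values of the consecutive sums, the \emph{same} given infinite plentiful sequence may be used for every such $N$, placed at sufficiently well-separated scales. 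For part (ii) each $n$ needs only a segment of length $f(n) - 1$, supplied by the hypothesis of arbitrarily long plentiful sequences. The remaining ``singleton'' side effects land at fresh very large values and cause no overshoot, since $f^{-1}(0)$ is finite and only finitely many choices of $u$ are excluded at each step.

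The main obstacle is the detailed bookkeeping analogous to Claims~1 and~2 in Section~\ref{sec:unique}: by choosing each new $u$ sufficiently large and suitably generic, one must verify that no accidental equivalences between representations arise, that the only double side effects occur at the prescribed plentiful values, and that no side effect lands on a value already saturated at its $f$-level. Scale separation between the blocks $\{u_{i}\}$ associated with different $n$'s controls inter-block interference, and the case analysis closely parallels that in the proof of Theorem~\ref{thm:unique}.
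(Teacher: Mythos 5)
Your overall strategy coincides with the paper's: the necessity argument (differences of the ``upper'' elements of many representations of a single $n$ yield a plentiful sequence) is exactly the paper's, and for sufficiency the key idea --- space the upper elements of the representations of each $n$ by consecutive sums of a plentiful sequence so that the unavoidable double side effects land on values where $f \geq 2$ --- is also the one the paper uses. However, there is a concrete inconsistency in your sufficiency construction. You require each newly adjoined element $u$ to be far larger than everything currently in $A$, and simultaneously prescribe $u_{i+1}-u_{i}=s_{i}^{(n_{k})}$, a \emph{single term} of a fixed plentiful sequence. Once requests for different values of $n$ are interleaved (which is unavoidable in part (i), where some $f(N)=\infty$), the element $u_{j_{k}-1}+s_{j_{k}-1}^{(n_{k})}$ is in general nowhere near large enough: intervening steps have inserted elements of much greater size, and a small gap would make side-effect values such as $u-a$ land on small, already partially or fully saturated values, destroying the count. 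The missing observation --- and the main technical device in the paper's sufficiency proof --- is that the gaps should be taken to be \emph{consecutive block sums} $\sigma_{l,m}=\sum_{i=l}^{m}s_{i}$ rather than single terms: block sums are unbounded (the $s_{i}$ are positive integers), all pairwise differences of the upper elements then remain consecutive sums and hence still satisfy $f \geq 2$, and the block length can be chosen at each step to exceed any prescribed threshold. This is precisely the role of the choice $x_{k}=a_{p}+\sigma_{m_{p+1},m_{p}}$ with $m_{p+1}$ sufficiently large in the paper.

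The same issue bites harder in your part (ii): a finite plentiful segment ``reserved'' for $n$ has bounded consecutive sums, so later requests for the same $n$ cannot be served at the prescribed offsets while also dominating the elements inserted in between. The paper sidesteps this by exhausting all $\gamma_{k}=f(t_{k})-f_{B_{k-1}}(t_{k})$ representations of $t_{k}$ in a single step, extracting from an arbitrarily long plentiful sequence (again via block sums) a plentiful sequence of length $\gamma_{k}-1$ whose terms grow as fast as needed relative to the current state. With these two repairs your plan becomes essentially the paper's proof; the rest of your bookkeeping (only the values $u-u_{i}$ receive double hits, all other side effects are single and land on fresh large values outside the finite set $f^{-1}(0)$) is correct and matches the paper's conditions (4.10)--(4.12).
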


We do not know whether this result can be extended to functions in 
$\mathscr{F}_{\infty}(\mathbb{Z})$, nor exactly which bounded functions
in $\mathscr{F}_{0}(\mathbb{Z})$ can be represented by $\mathscr{L}_0$. 

\begin{proof}
Throughout this proof, since we are working with a fixed form 
$\mathscr{L}_0$, we will write simply $f_A$ for the representation function 
of a subset $A$ of $\mathbb{Z}$. 
\\
\\
We begin with the proof of part (i) : that for part (ii) will be similar.
First suppose $f$ is representable by $\mathscr{L}_0$ and let
$A \subseteq \mathbb{Z}$ be such that $f_A = f$.
Suppose $f(n) = \infty$. Let
$(x_{i},y_{i})_{i=1}^{\infty}$ be a sequence of pairs of elements of $A$
such that $x_{i} - y_{i} = n$
for each $i$ and such that the sequence $(x_{i})$ is either strictly
increasing or strictly decreasing. Let
\be
s_{i} := \left\{ \begin{array}{lr} x_{i+1} - x_{i}, & {\hbox{if $(x_{i})$
increasing}}, \\ x_{i} - x_{i+1}, & {\hbox{if $(x_{i})$ decreasing}}.
\end{array} \right.
\ee
Let $1 \leq l \leq m$. Then
\be
x_{m+1} - y_{m+1} = x_{l} - y_{l} \Rightarrow x_{m+1} - x_{l} =
y_{m+1} - y_{l} = \pm \sum_{i=l}^{m} s_{i},
\ee
and hence
\be
f_{A} \left( \sum_{i=l}^{m} s_{j} \right) \geq 2.
\ee
Thus the sequence $(s_{i})$ is plentiful for $f$.
\\
\\
Conversely, suppose there exists an infinite plentiful sequence
$(s_i)_{i = 1}^{\infty}$ for $f$. We will construct a set $A$ which 
represents $f$. Set
\be
S := \left\{ \sigma_{l,m} := \sum_{i=l}^{m} s_{i} \; \mid \;
1 \leq l \leq m < \infty \right\}.
\ee
Set
\be
\mathscr{I} := \{n : f(n) > 1\}, \;\;\; \mathscr{J} := \{n : f(n) = 1 \},
\ee
and note that $S \subseteq \mathscr{I}$. Let $\mathscr{M}$ be the multisubset
of $\mathbb{Z}$ consisting of $f(n)$ copies of $n$ for each $n$, and 
$\mathscr{O}$ any well-ordering of $\mathscr{M}$. Put 
$A_0 := \{d_0\}$ for any choice of a non-zero integer $d_0$. The set 
$A$ will be constructed step-by-step as in (2.4). At each step $k > 0$ the
set $A_k$ will consist of two suitably chosen integers $x_k$ and $y_k$. 
Our choices will be made so as to ensure that the 
following two requirements are satisfied for every $k$ : 
\\
\\
{\bf (I)} If
\be
\mathscr{U}_{k} := \{ n : f_{B_{k}}(n) > f_{B_{k-1}}(n) \},
\ee
then $t_k \in \mathscr{U}_{k}$, where $t_k$ is the least number in the
ordering $\mathscr{O}$ not yet represented by $B_{k-1} := \sqcup_{j=0}^{k-1}
A_j$. Moreoever, for any $n \in \mathscr{U}_{k}$, then
\be
f_{B_{k-1}}(n) < f(n),
\ee
\be
f_{B_{k}}(n) \leq f_{B_{k-1}}(n) + 2,
\ee
and
\be
{\hbox{if}} \; f_{B_{k}}(n) = f_{B_{k-1}}(n) + 2 \; {\hbox{then}} \;
f_{B_{k-1}}(n) = 0 \; {\hbox{and}} \; n \in S.
\ee
{\bf (II)} Suppose $n \in \mathscr{I}$. Let $f_{B_{k}}(n) := p_{k}$
and
\be
a_{1} - b_{1} = \cdots = a_{p_{k}} - b_{p_{k}} = n,
\ee
be the different representations of $n$ in $B_{k}$, where $a_{1} < a_{2} <
\cdots < a_{p_{k}}$. Then there exist integers $0 < m_{1} < m_{2} <
\cdots < m_{p_{k}}$ such that
\be
a_{i+1} - a_{i} = \sigma_{m_{i+1},m_{i}}, \;\;\;\;\;\; i = 1,...,p_{k} - 1.
\ee
It is clear that if {\bf (I)} and {\bf (II)} 
are satisfied for every $k \geq 0$,
then the set $A$ given by (2.4) represents $f$. The condition {\bf (II)}
will be
useful in establishing (4.12). The elements of the
different $A_k$ are chosen inductively.
Observe that {\bf (I), (II)} are trivially satisfied for $k = 0$, so 
suppose $k > 0$ and that {\bf (I), (II)} are satisfied for each $k^{\prime} < k$.
We now describe how the elements of $A_{k}$ may be chosen. Let
\be
M_{k} := \max \{ |n| : n \in B_{k-1} \}.
\ee
and note that $f_{B_{k-1}}(n) = 0$ for all $n > 2M_{k}$.
\\
\\
{\sc Case I} : $t_k \in \mathscr{J}$.
\\
\\
Then {\bf (II)} will continue to hold no matter what we do. 
We now choose $x_{k}$ to
be any integer greater than $2|t_{k}| + 3M_{k}$, and choose $y_{k} :=
x_{k} - t_{k}$. This choice of $x_{k}$ and $y_{k}$ guarantees that, if
$n \in \mathscr{U}_{k}$, then
\par (a) $f_{B_{k-1}}(n) = 0$ and
\par (b) $f_{B_{k}}(n) = 1$,
\\
hence that {\bf (I)} is satisfied. To verify (a), we observe that if $n \in
\mathscr{U}_{k}$ then either $n = \pm t_{k} = \pm (x_{k} - y_{k})$ or $|n|
> 2M_{k}$. For (b), we note that if $a,b \in B_{k-1}$ and $x_{k} - a =
y_{k} - b$, then $x_{k} - y_{k} = a-b$, contradicting the assumption
that $t_k \in \mathscr{J}$.
\\
\\
{\sc Case II} : $t_k \in \mathscr{I}$.
\\
\\
Let
$p := f_{B_{k-1}}(t_k)$. If $p = 0$ then proceed as in Case I. Otherwise 
let $(a_{i},b_{i})_{i=1}^{p}$ be the
different representations of $t_k$ in $B_{k-1}$ and let $m_{1},...,m_{p}$ be
the
integers for which (4.13) is satisfied (with $k-1$ instead of $k$). We choose
\be
x_{k} := a_{p} + \sigma_{m_{p+1},m_{p}},
\ee
for some sufficiently large integer $m_{p+1}$ such that $x_{k} >
2|t_k| + 3M_{k}$. Then we take $y_{k} := x_{k} - t_k$.
Reasoning as in Case I, the size of $x_{k}$ and $y_{k}$ guarantee that (4.10)
and (4.11) will be satisfied, and the relationships (4.14) and (4.16) 
will imply (4.12), and thus ensure that {\bf (II)} still holds.
\par This completes the induction step, and hence the proof of part (i) of
the theorem.
\\
\\
Now we briefly outline the proof of part (ii). That representability of 
$f$ implies the existence of arbitrarily long plentiful sequences is 
shown in the same way as before. Suppose now such sequences exist. The 
construction of $A$ such that $f_A = f$ proceeds as above and, with notation
as before, the only difference is 
in the inductive choice of the elements in $A_k$ for $k > 0$. Suppose we
have already chosen $A_k^{\prime}$ for $k^{\prime} < k$ so that 
\be
f_{B_{k-1}}(n) \leq f(n) \;\;\;\; \forall \; n \in \mathbb{Z}.
\ee
Let $t_k$ be the least integer in the ordering $\mathscr{O}$ for which 
$f_{B_{k-1}}(t_k) < f(t_k)$. Set 
$\gamma_{k} := f(t_{k}) - f_{B_{k-1}}(t_{k})$. The set $A_{k}$ will be the 
union of $2\gamma_{k}$ elements $x_{i,k}, y_{i,k}$, $i = 1,...,\gamma_{k}$. 
\\
\\
First, with $M_k$ defined as in (4.15), 
we choose $x_{1,k}$ to be any integer greater than $2|t_{k}| + 
3M_{k}$ and take $y_{1,k} := x_{1,k} - t_{k}$. Since there exist arbitrarily long
plentiful sequences, we 
can find such a sequence
\be
0 < a_{1} < a_{2} < \cdots < a_{\gamma_{k}-1}
\ee
such that each of the quotients 
\be
{a_{1} \over x_{1,k}}, \;\;\; {a_{i+1} \over a_{i}}, \;\;\; i = 1,...,
\gamma_{k} - 1
\ee
is arbitrarily large. We then wish to choose the remaining elements of 
$A_{k}$ as 
\be
x_{i,k} := x_{i-1,k} + a_{i-1}, \;\;\;\;\;\; 
y_{i,k} := x_{i,k} - t_{k}, \;\;\;\;\;\; i = 2,...,\gamma_{k}.
\ee
Provided the quotients in (4.19) are all sufficiently large, it is clear that,
if $n \in \mathscr{U}_{k}$, then either 
\par (a) $n = \pm t_{k}$ and $f_{B_{k}}(n) = f_{B_{k-1}}(n) + \gamma_{k} = 
f(n)$, or
\par (b) $f_{B_{k-1}}(n) = 0$ and either $f_{B_{k}}(n) = 1$, or 
$f_{B_{k}}(n) = 2$ and $n \in \mathscr{I}$. 
\\
Thus (a) and (b) ensure that (4.17) is also satisfied for this value of $k$, 
and thus the set $A$ given by (2.4) will satisfy $f_A = f$. 
\par Hence the proof of Theorem \ref{thm:xminusy} is complete.    
\end{proof}

\setcounter{equation}{0}
\section{Open problems}\label{sec:conclusions}

We only mention what are probably the two most glaring issues left unresolved
by the investigations above. 
\\
\\
1. Which functions $f \in \mathscr{F}_{\infty}(\mathbb{Z})$ are representable 
by a partition irregular form ? Does there exist such a form which
represents any such function ? For the form $\mathscr{L}_{0}$, we want to 
know if Theorem \ref{thm:xminusy} can be extended to 
$\mathscr{F}_{\infty}(\mathbb{Z})$, 
and which bounded functions are representable. 
\\
\\
2. The methods employed in this paper to construct sets with 
given representation functions have, in common with previous similar methods, 
the obvious weakness that they produce very sparse sets. It is an 
important unsolved problem to find the maximal possible density of 
a set with a given representation function : this problem is still 
unsolved for every possible $f$ and $\mathscr{L}$, though the most
natural case to look at is $f \equiv 1$. It should be investigated to what
extent existing optimal constructions for the forms $x_1 + \cdots + x_h$ can
be extended to general forms. 
  
\section*{Acknowledgement}

I thank Boris Bukh for a very helpful discussion.

\ \\

\end{document}